\def\C{\bold C}
\def\Z{\bold Z}
\def\R{\bold R}
\def\Ker{\text{\rm Ker}}
\def\Q{\bold Q}
\newtheorem{thm}{Theorem}[section]
\newtheorem*{thm*}{Theorem}
\newtheorem{lem}[thm]{Lemma}
\newtheorem{prop}[thm]{Proposition}
\newtheorem{prop1.3}[thm]{Proposition 1.3}
\newtheorem{prop1.4}[thm]{Proposition 1.4}
\newtheorem{prop1.5}[thm]{Proposition 1.5}
\newtheorem{prop1.6}[thm]{Proposition 1.6}
\newtheorem{cor}[thm]{Corollary}
\theoremstyle{definition}
\theoremstyle{remark}
\newtheorem{remark}[thm]{Remark}
\numberwithin{equation}{section}
\begin{document}

\title{Some more non-arithmetic rigid groups}


\author{Alexander Lubotzky}

\address{Institute of Mathematics, Hebrew University, Jerusalem 91904, ISRAEL}
\email{alexlub@math.huji.ac.il}
\thanks{The author is indebted to the Israel
Science Foundation and the U.S.-Israel Binational Science
Foundation for their support.} \subjclass[2000]{22E40}

\date{}

\dedicatory{Dedicated to the memory of Bob Brooks}
\maketitle


\centerline{\bf 0. Introduction}

\medskip

Let $\Gamma$ be a finitely generated group. $\Gamma$ is called
{\bf rigid} if for every $n\geq 1$, $\Gamma$ admits only finitely
many isomorphism classes of irreducible representations into
$GL_n(\C)$.  Platonov (cf. \cite{PR} p. 437) conjectured that if
$\Gamma$ is a linear rigid group then $\Gamma$ is of ``arithmetic
type'' (i.e., commensurable with a product of $S$-arithmetic
groups).

In \cite{BL}, H. Bass and the author gave counter-examples to
Platonov's conjecture.  The examples $\Gamma$ there are very
special; they are subgroups of $L\times L$ when $L$ is a uniform
(arithmetic) lattice in the rank one simple Lie group
$F_4^{(-20)}$.  The proof there relies on four main ingredients:

\begin{enumerate}

\item{} The super-rigidity (\'a la Margulis) of $L$, which was proved
by Corlette \cite{C} and Gromov-Schoen \cite{GS}.

\item{} The
Ol'shanski\u\i-Rips theorem ([O]) asserting that $L$ being
hyperbolic has a finitely presented infinite quotient $H$ with no
proper finite index subgroup.

\item{}  Grothendieck's theorem \cite{GO}
which says that once the inclusion of $\Gamma$ to $L\times L$
induces an isomorphism of the pro-finite completions $\hat\Gamma$
and $\hat L\times\hat L$, the representation theory of $\Gamma$ is
the same as that of $L\times L$.

\item{} The vanishing of the second Betti number $\beta_2(L') =
\dim H^2(L',\R)=0$ for every finite index subgroup $L'$ of $L$
(proved by Kumareson and Vogan-Zuckerman (cf. \cite{VZ})).
\end{enumerate}

In this note we give a simplified version of the proof in
\cite{BL}.  We eliminate the use of ingredients (3) and (4).  Not
using (3) makes the proof more elementary, but avoiding (4) is
even more significant: our proof now works also when $L$ is any
uniform lattice in $Sp(n,1)$ (for every $n\geq 2$), groups for
which (4) does not hold (as follows from (\cite{Li}, Cor. 6.5). We
thus have many new examples which are Zariski dense in
$Sp(n,1)\times Sp(n,1)$.

Just as in \cite{BL}, the counter examples to Platonov's
conjecture constructed here are even super-rigid (see \S1).  In
summary:

\begin{thm*}  Let $G$ be either $Sp(n,1), n\geq 2$, or
$F_4^{-(20)}$ and let $L$ be a cocompact lattice in $G$.  Then
$L\times L$ contains a subgroup $\Gamma$ of infinite index which
is Zariski dense in $G\times G$, super-rigid and not of arithmetic
type.
\end{thm*}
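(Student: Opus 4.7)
The plan is to build $\Gamma$ as a fibered product, following the strategy of \cite{BL}. Apply ingredient (2) to the word-hyperbolic cocompact lattice $L$: this produces a quotient $\pi:L\twoheadrightarrow H$ with $H$ infinite, finitely presented, and admitting no proper subgroup of finite index. Set $N:=\ker\pi$ and
\[
\Gamma \;:=\; L\times_{H} L \;=\; \{(x,y)\in L\times L : \pi(x)=\pi(y)\}.
\]
Then $1\to N\times N \to \Gamma\to H\to 1$ is exact (via either coordinate), so $[L\times L:\Gamma]=|H|=\infty$; moreover $\Gamma$ visibly contains the diagonal $\Delta L$, the axes $N\times\{1\}$ and $\{1\}\times N$, and decomposes as $\Gamma=\Delta L\cdot(\{1\}\times N)$.

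For Zariski density, let $\overline N$ denote the Zariski closure of $N$ in $G$. Since $N$ is normal in $L$ and $L$ is Zariski dense in $G$, $\overline N$ is a normal algebraic subgroup of $G$. As $G$ is a simple algebraic group, $\overline N$ is either finite or all of $G$; being infinite it must equal $G$. Consequently $N\times\{1\}$ is Zariski dense in $G\times\{1\}$, $\Delta L$ is Zariski dense in $\Delta G$, and together these generate a Zariski-dense subgroup of $G\times G$.

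Super-rigidity is the main technical step, and I expect the real obstacle to sit here. Given an irreducible $\rho:\Gamma\to GL_n(\C)$, write $\alpha(l):=\rho(l,l)$ and $\beta(n):=\rho(1,n)$. Ingredient (1) extends $\alpha$ to an algebraic representation $\tilde\alpha:G\to GL_n(\C)$, up to a bounded compact twist. The compatibility
\[
\alpha(l)\,\beta(n)\,\alpha(l)^{-1} \;=\; \beta(lnl^{-1}) \qquad (l\in L,\; n\in N)
\]
together with the Zariski density of $N$ in $G$ forces the Zariski closure of $\beta(N)$ to be normalised by $\tilde\alpha(G)$; invoking ingredient (1) once more on the second factor extends $\beta$ uniquely to an algebraic representation $\tilde\beta:G\to GL_n(\C)$. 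Using the decomposition $\Gamma=\Delta L\cdot(\{1\}\times N)$, the two extensions assemble into a representation of $G\times G$ whose restriction to $\Gamma$ agrees with $\rho$ up to a bounded twist. Because the irreducible algebraic representations of $G\times G$ form a discrete set, this yields both super-rigidity and rigidity of $\Gamma$.

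To rule out $\Gamma$ being of arithmetic type, note that $N\times N$ is an infinite normal subgroup of $\Gamma$ of infinite index (since $\Gamma/(N\times N)\cong H$). If $\Gamma$ were commensurable with an irreducible $S$-arithmetic lattice of real rank $\geq 2$, Margulis's normal subgroup theorem would force such a normal subgroup to be finite or of finite index, a contradiction. Alternatively, if $\Gamma$ were commensurable with a reducible product of rank-one $S$-arithmetic groups, then by the super-rigidity just proved the ambient product would embed into $G\times G$ as a lattice, contradicting the infinite index of $\Gamma$ in the lattice $L\times L$.
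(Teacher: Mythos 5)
There is a genuine gap, in fact two, and they sit exactly at the step you flagged as ``the real obstacle.'' First, your extension of $\beta$ is not legitimate: Corlette/Gromov--Schoen super-rigidity (ingredient (1)) is a statement about representations of the \emph{lattice} $L$ (or its finite-index subgroups), and $N=\ker\pi$ is an infinite-index normal subgroup of $L$, not a lattice in $G$, so ``invoking ingredient (1) once more on the second factor'' to extend $\beta:N\to GL_n(\C)$ to an algebraic representation of $G$ has no justification. The paper never extends a representation of $N$; instead it works with the Zariski closure $H$ of $\rho(\Lambda)$, uses that the closures of the images of the two kernels $R_1',R_2'$ are commuting normal subgroups whose product is all of $H$ (because $\tilde D$ has no finite-index subgroups, hence no finite-dimensional representations), and then identifies each quotient $H/\overline{\rho(R_i')}$ as an image of $A(L)^0$ via the projection $\Gamma\to L$, i.e.\ super-rigidity is only ever applied to groups commensurable with $L$ (Propositions 1.5 and 1.6).

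Second, and more fundamentally for \emph{this} paper, you take the naive fibre product $\Gamma=L\times_H L$ over the Ol'shanski\u\i--Rips quotient, and nothing in your argument controls the abelian, central and unipotent directions: $\Gamma$ (or a finite-index subgroup) could have infinite abelianization, coming from $H_2(H)$ via central extensions, and then $\Gamma$ would not even be rigid (rigid $\Rightarrow$ FAb, Proposition 1.1), no matter what happens on the semisimple part that your $\tilde\alpha,\tilde\beta$ are meant to capture. In \cite{BL} this was excluded using the vanishing of $\beta_2$ of finite-index subgroups of $L$ (ingredient (4)), which is false for lattices in $Sp(n,1)$; the whole point of the present paper is to avoid that by \emph{changing the construction}: one replaces $D$ by a central extension $\tilde D$ (and $L$ by a finite-index subgroup), chosen by maximizing $rk(R_0/[L_0,R_0])$, which is finite because it is bounded by $rk(H_2(D))$ (Lemmas 2.2--2.3 and the Claim in \S 2), so that $\Gamma=L\times_{\tilde D}L$ is FAb (Lemma 2.1, Corollary 2.4) and Proposition 1.6 applies. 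Your proposal uses the unmodified $L\times_H L$, so even after repairing the $\beta$-step you would be back to needing ingredient (4), which is unavailable for $Sp(n,1)$. The non-arithmeticity sketch at the end is also thinner than the argument cited from \cite{BL} (pp.~1171--1172), but the two points above are the substantive gaps.
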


We end the note with two suggestive remarks to be explained in \S4:

We show that if $L$ satisfies the congruence subgroup property,
then a further simplification is possible, avoiding also the use
of (1) and (2), i.e. the works of Corlette, Gromov-Schoen,
Ol'shanskii and Rips.

Furthermore, if a single uniform lattice $L$ in $Sp(n,1)$ or
$F_4^{(-20)}$ satisfies the congruence subgroup property, then
there exists an hyperbolic group \'a la Gromov which is not
residually-finite.  Moreover, there exists a hyperbolic group with
no proper finite index subgroup.  So, answering the congruence
subgroup problem in the affirmative, for one such $L$ would settle
in negative the long standing open problem on the
residual-finiteness of hyperbolic groups.

\section{FAb fibre products of rigid groups are rigid}

Throughout the paper, $\Gamma$ is a finitely generated group.
$\Gamma$ is {\bf FAb} if for every finite index subgroup $\Lambda$
of $\Gamma$, $\Lambda^{ab} := \Lambda/[\Lambda, \Lambda ]$ is
finite. $\Gamma$ is {\bf rigid} if for every $n\geq 1$, it has
only finitely many non-equivalent irreducible $n$-dimensional
complex representations.  Finally, $\Gamma$ is {\bf super-rigid}
if $A(\Gamma)^0$, the connected component of $A(\Gamma)$ is
finite-dimensional algebraic group.  Here, $A(\Gamma)$ is the
pro-algebraic completion of $\Gamma$.  There is a homomorphism
$i:\Gamma\to A(\Gamma)$ such that for every representation $\rho
:\Gamma\to GL_n(\C)$, there exists a unique algebraic
representation $\tilde\rho :A(\Gamma)\to GL_n(\C)$ with
$\tilde\rho\circ i = \rho$.

It is not difficult to prove:

\begin{prop}
$\text{super-rigid}\Rightarrow\text{rigid}\Rightarrow FAb$.
\end{prop}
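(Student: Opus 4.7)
The plan is to prove the two implications separately.

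For \emph{rigid $\Rightarrow$ FAb}, I argue by contrapositive. Suppose $\Lambda \leq \Gamma$ is a (without loss of generality normal) finite-index subgroup with $\Lambda^{\mathrm{ab}}$ infinite. Then the character set $X := \operatorname{Hom}(\Lambda,\C^*) = \operatorname{Hom}(\Lambda^{\mathrm{ab}}, \C^*)$ is an infinite complex algebraic set carrying the conjugation action of the finite group $\Gamma/\Lambda$. In the trivial-action case every character of $\Lambda^{\mathrm{ab}}$ extends to a character of $\Gamma$, so $\Gamma^{\mathrm{ab}}$ is infinite and rigidity fails already at $n=1$. Otherwise, after replacing $\Lambda$ by the preimage of the kernel of the action so that $\Gamma/\Lambda$ acts faithfully on $X$, one checks that on a Zariski-dense open subset the $\Gamma$-stabilizer of $\chi$ is exactly $\Lambda$. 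By Mackey's irreducibility criterion, $\operatorname{Ind}_\Lambda^\Gamma \chi$ is then an irreducible representation of $\Gamma$ of dimension $[\Gamma:\Lambda]$, and two such induced representations are isomorphic iff the characters lie in the same finite $\Gamma/\Lambda$-orbit. Infinitely many characters thus yield infinitely many inequivalent $[\Gamma:\Lambda]$-dimensional irreducibles, contradicting rigidity.

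For \emph{super-rigid $\Rightarrow$ rigid}, the universal property of $A(\Gamma)$ identifies isomorphism classes of $n$-dimensional irreducible representations of $\Gamma$ with those of $A(\Gamma)$. Any algebraic $\tilde\rho : A(\Gamma) \to GL_n(\C)$ factors through a finite-dimensional algebraic quotient $G$ whose identity component $G^0$ is a quotient of $A(\Gamma)^0$, hence has dimension at most $d := \dim A(\Gamma)^0$. The component group $G/G^0$ is a quotient of the profinite $A(\Gamma)/A(\Gamma)^0$ and is therefore finite. Standard finiteness of rational irreducible representations of a complex algebraic group of bounded dimension (passing to the reductive quotient, highest-weight classification for the semisimple part, and Clifford theory for the finite component group) then bounds the number of $n$-dimensional irreducibles.

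The main point of care is the super-rigid direction: a potential central torus in the reductive quotient of $A(\Gamma)^0$ would otherwise deliver infinitely many algebraic characters, and hence infinitely many $1$-dimensional representations of $\Gamma$, precluding rigidity. This is handled by noting that algebraic characters of $A(\Gamma)$ coincide with characters of $\Gamma^{\mathrm{ab}}$; applying the Mackey argument of the first part specialized to $n=1$ rules out infinitely many $1$-dimensional representations of $\Gamma$, so the central torus of $A(\Gamma)^0$ must be trivial. The highest-weight argument on the remaining semisimple part then closes the proof.
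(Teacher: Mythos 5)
The paper offers no proof of this proposition (it is stated with ``It is not difficult to prove''), so I can only judge your argument on its own terms; as written, both halves have genuine gaps. In the direction rigid $\Rightarrow$ FAb, the key step ``after making the action of $\Gamma/\Lambda$ on $X=\mathrm{Hom}(\Lambda,\C^*)$ faithful, a Zariski-dense open set of characters has stabilizer exactly $\Lambda$'' is false: faithfulness on $X$ does not imply faithfulness on the identity component $X^0$, and the torsion part of $\Lambda^{ab}$ can force \emph{every} character to have a nontrivial stabilizer. For instance $\Gamma=\Z\times S_4$, $\Lambda=\Z\times V$ ($V$ the Klein four-group): here $\Gamma/\Lambda\cong S_3$ acts faithfully on $X\cong\C^*\times(\Z/2)^2$, yet every $\chi$ is fixed by some nontrivial coset, so Mackey irreducibility never applies and your dichotomy is not exhaustive. (Also, in the trivial-action branch, ``every character of $\Lambda$ extends to $\Gamma$'' is obstructed in general by a class in $H^2(\Gamma/\Lambda,\C^*)$; the conclusion that $\Gamma^{ab}$ is infinite survives, but only via a transfer/averaging argument on $H^1(\cdot,\C)$.) The clean repair needs no genericity at all: every $\chi\in\mathrm{Hom}(\Lambda,\C^*)$ occurs as a composition factor of $\mathrm{Res}_\Lambda\sigma$ for some irreducible constituent $\sigma$ of $\mathrm{Ind}_\Lambda^\Gamma\chi$, and $\dim\sigma\le[\Gamma:\Lambda]$; rigidity leaves finitely many such $\sigma$, each contributing finitely many one-dimensional composition factors on restriction, contradicting the infinitude of characters when $\Lambda^{ab}$ is infinite.

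In the direction super-rigid $\Rightarrow$ rigid there are two problems. First, per-quotient finiteness does not give global finiteness: the component groups $G/G^0$ of the algebraic quotients through which the various $\tilde\rho$ factor range over \emph{all} finite quotients of $\Gamma$, which are unbounded, so ``Clifford theory for the finite component group'' bounds the irreducibles of each single $G$ but not their union; a priori one could have infinitely many $n$-dimensional irreducibles with finite image (e.g.\ surjections onto infinitely many dihedral groups give infinitely many $2$-dimensional irreducibles), and excluding this requires FAb-type control (Jordan's theorem, finitely many subgroups of bounded index, finitely many characters of each). Second, your treatment of the central torus is invalid: torus quotients of $A(\Gamma)^0$ are governed by the abelianizations of \emph{finite-index subgroups} of $\Gamma$, not by $\Gamma^{ab}$, so knowing that $\Gamma$ has only finitely many $1$-dimensional representations does not make the central torus of the reductive quotient of $A(\Gamma)^0$ trivial (compare a lattice in $SO(n,1)$ with finite abelianization but a finite-index subgroup surjecting onto $\Z$); moreover, invoking ``the Mackey argument specialized to $n=1$'' while trying to \emph{prove} rigidity is circular as phrased. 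The standard route is to establish super-rigid $\Rightarrow$ FAb first: if some finite-index $\Lambda$ surjects onto $\Z$, then $A(\Lambda)$ surjects onto $A(\Z)$, whose identity component is infinite dimensional, while $A(\Lambda)^0\cong A(\Gamma)^0$ by the finite-index invariance underlying Proposition 1.2 --- and only then run your reductive/highest-weight and Clifford counting, now with the central torus and the finite-image representations both controlled by FAb.
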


\begin{prop}
If $\Lambda$ is a finite index subgroup of
$\Gamma$, then $\Lambda$ is super-rigid (resp. rigid, FAb) iff
$\Gamma$ is.
\end{prop}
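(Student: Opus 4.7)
The three properties behave the same way under passage to a finite-index subgroup because such a subgroup ``sees'' the same large-scale representation theory, but each implication needs its own tool. Throughout write $d = [\Gamma:\Lambda]$.

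\emph{FAb.} The elementary lemma is: if $H \leq K$ has finite index (with $K$ finitely generated) and $H^{\mathrm{ab}}$ is finite, then $K^{\mathrm{ab}}$ is finite. Indeed, the image of $H$ in $K^{\mathrm{ab}}$ is a finite-index subgroup that factors through $H^{\mathrm{ab}}$, hence finite; and a finitely generated abelian group containing a finite subgroup of finite index is finite. With this, both directions are immediate: $\Gamma$ FAb $\Rightarrow$ $\Lambda$ FAb since every finite-index subgroup of $\Lambda$ is finite-index in $\Gamma$; and $\Lambda$ FAb $\Rightarrow$ $\Gamma$ FAb by applying the lemma with $H = M \cap \Lambda$, $K = M$ for any finite-index $M \leq \Gamma$.

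\emph{Rigid.} The tool is Frobenius reciprocity. If $\Gamma$ is rigid, any irreducible $\sigma$ of $\Lambda$ of dimension $n$ is a constituent of $\rho|_\Lambda$ for some irreducible component $\rho$ of $\mathrm{Ind}_\Lambda^\Gamma \sigma$; such $\rho$ has dimension $\leq nd$, so finiteness of $\rho$'s in each dimension $\leq nd$ bounds the possibilities for $\sigma$. Conversely, if $\Lambda$ is rigid, any irreducible $\rho$ of $\Gamma$ of dimension $n$ has $\rho|_\Lambda$ containing some irreducible $\sigma$ of $\Lambda$ of dimension $\leq n$, and $\rho$ is then a constituent of $\mathrm{Ind}_\Lambda^\Gamma \sigma$; finitely many $\sigma$ thus give finitely many $\rho$.

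\emph{Super-rigid.} The geometric input is that for any representation $\rho$ of $\Gamma$, the image $\rho(\Lambda)$ has finite index in $\rho(\Gamma)$, so $\overline{\rho(\Lambda)}$ is a finite union of cosets of itself inside $\overline{\rho(\Gamma)}$, and the two Zariski closures share the same identity component. Combined with the equivalence ``$A(\cdot)^0$ finite-dimensional iff there is a uniform bound $N$ on $\dim \overline{\rho(\cdot)}^0$'' (extracted from the inverse-limit construction of the pro-algebraic completion), this gives $\Lambda$ super-rigid $\Rightarrow$ $\Gamma$ super-rigid at once. For the reverse, every $\sigma$ on $\Lambda$ appears as a direct summand of $(\mathrm{Ind}_\Lambda^\Gamma \sigma)|_\Lambda$, so $\overline{\sigma(\Lambda)}^0$ is a quotient of $\overline{(\mathrm{Ind}\sigma)(\Lambda)}^0 = \overline{(\mathrm{Ind}\sigma)(\Gamma)}^0$, whose dimension is uniformly bounded when $\Gamma$ is super-rigid. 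The only step beyond routine Frobenius reciprocity and the finite-index argument for abelian groups is this reformulation of super-rigidity as a uniform dimension bound across all representations, which is the one point to nail down carefully.
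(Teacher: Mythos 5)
The paper gives no proof of this proposition at all (it is prefaced only by ``It is not difficult to prove''), so there is no argument of the author's to compare yours against; judged on its own, your proof is essentially correct. The FAb part is complete, and the two rigidity directions are sound: since representations of infinite groups need not be semisimple you are implicitly combining the Mackey observation that $\sigma$ is a direct summand of $(\mathrm{Ind}_\Lambda^\Gamma\sigma)|_\Lambda$ with Jordan--H\"older and the fact that a finite-dimensional representation has only finitely many irreducible constituents, and all of that goes through with induction being coinduction in finite index. The one load-bearing step you leave unproved is exactly the one you flag: the equivalence of ``$A(\cdot)^0$ is a finite-dimensional algebraic group'' with a uniform bound on $\dim\overline{\rho(\cdot)}^{\,0}$. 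The direction super-rigid $\Rightarrow$ bounded dimensions is immediate, but you invoke the converse in both implications, and it is not purely formal: an inverse limit of central isogenies of bounded dimension (for instance over a central torus) need not be an algebraic group, so one must exclude infinite isogeny towers, e.g.\ by noting that a torus quotient of some $\overline{\rho(\Gamma)}^{\,0}$ yields a character of infinite image on a finite-index subgroup, whence direct sums of multiplicatively independent characters give tori of unbounded dimension, contradicting the bound; with no torus quotients the tower is controlled by the (finite) fundamental group of the semisimple part. This equivalence is essentially contained in \cite{BL}. Alternatively, you could bypass it altogether by using the same induction/restriction and finite-index observations to show directly that the natural map $A(\Lambda)^0\to A(\Gamma)^0$ is an isomorphism, which is the cleaner route and gives at once that all three properties are commensurability invariants.
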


\medskip

We also have:

\begin{prop}
  Assume $\Gamma$ has FAb.  Then given $n$,
there exists a finite index subgroup $\Lambda =\Lambda (n)$ of
$\Gamma$ such that for every $n$-dimensional representation
\newline $\rho :\Gamma \to GL_n(\C)$, $\overline{\rho(\Lambda)}$
is a connected group. If $\rho$ is irreducible then
$\overline{\rho (\Lambda)}$ is semisimple.
\end{prop}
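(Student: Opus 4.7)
The plan is to reduce the problem to finding a uniform bound on the component index $[G:G^0]$, where $G := \overline{\rho(\Gamma)}$. If $\Lambda \subseteq \Gamma$ is of finite index, then $\overline{\rho(\Lambda)}$ is a finite-index Zariski-closed subgroup of $G$ and hence automatically contains $G^0$; so $\overline{\rho(\Lambda)}$ is connected precisely when it equals $G^0$, i.e.\ when $\rho(\Lambda) \subseteq G^0$. It therefore suffices to produce a single bound $N = N(n,\Gamma)$ with $[G:G^0] \le N$ for every $n$-dimensional $\rho$; one then sets $\Lambda$ equal to the intersection of all subgroups of $\Gamma$ of index at most $N$, which is itself of finite index because the finitely generated group $\Gamma$ has only finitely many subgroups of each given finite index.

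For the uniform bound on $[G:G^0]$, I would use Jordan's theorem: there is a constant $J(n)$ such that every finite subgroup of $GL_n(\C)$ has an abelian normal subgroup of index at most $J(n)$. Although the finite quotient $G/G^0$ is not itself a subgroup of $GL_n(\C)$, it can be analyzed via its action on $G^0$. For instance, $G$ permutes the $G^0$-isotypic components $V_1,\ldots,V_k$ ($k \le n$) of $\C^n$, giving a homomorphism $G/G^0 \to S_k$ of image of order at most $n!$; the kernel acts on the multiplicity spaces through a subgroup of $\prod_i GL(M_i)$, where Jordan applies again. FAb then converts these abelian-by-finite bounds into a concrete uniform bound: since $\Gamma$ is finitely generated, it has only finitely many subgroups of each bounded index, and their abelianizations (finite by FAb) therefore have uniformly bounded order.

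For the semisimplicity claim, assume $\rho$ is irreducible. Then $G$ is reductive and, after applying the first step, $\overline{\rho(\Lambda)} = G^0 = Z^0 \cdot [G^0,G^0]$ with $Z^0$ a central torus and $[G^0,G^0]$ semisimple. Every character of the torus $T := G^0/[G^0,G^0]$ pulls back to a character of $\Lambda$, which by FAb has finite image; since a finitely generated subgroup of a torus on which every character takes only finitely many values must itself be finite, the image of $\rho(\Lambda)$ in $T$ is finite, of order bounded uniformly in $\rho$ by the same Jordan/FAb mechanism. Shrinking $\Lambda$ further to lie in the kernel of all such torus-quotients of bounded order forces $\rho(\Lambda) \subseteq [G^0,G^0]$, making $\overline{\rho(\Lambda)}$ semisimple. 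The main obstacle is the \emph{uniformity} of the bound $N(n,\Gamma)$ over all $\rho$: marrying Jordan's theorem to FAb so that no $\rho$-dependence remains in either the component count or the torus-quotient order is the delicate point.
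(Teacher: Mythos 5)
There is a genuine gap, and it sits exactly where you flag it yourself: the claim that the component group $G/G^0$, $G=\overline{\rho(\Gamma)}$, has an abelian normal subgroup of index bounded by a constant depending only on $n$. Your sketch via the permutation of $G^0$-isotypic components and the action on multiplicity spaces does not establish this as stated: for a general $n$-dimensional $\rho$ (the connectedness conclusion is required for \emph{every} $\rho$, not just irreducible ones) the closure $G$ need not be reductive and $\C^n$ need not be semisimple as a $G^0$-module, so the isotypic decomposition is not available; and even in the reductive case an element of the kernel acts on each $V_i\cong W_i\otimes M_i$ only as $a_i\otimes b_i$ \emph{up to scalar}, so you get a projective, not linear, representation of the finite quotient on $\prod_i GL(M_i)$, and Jordan does not apply directly. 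The clean way to close this hole is the standard fact (used in the paper, citing Wehrfritz 10.10) that any algebraic subgroup $H\leq GL_n(\C)$ satisfies $H=H^0\cdot F$ with $F$ a \emph{finite subgroup of $GL_n(\C)$}; then $H/H^0$ is a quotient of $F$, Jordan applies on the nose, and your ``Jordan + FAb + finitely many subgroups of bounded index in a finitely generated group'' mechanism does indeed yield the uniform bound $N(n,\Gamma)$ you want. So your overall architecture is salvageable, but the key structural input is missing and your proposed substitute for it fails in the non-reductive case.

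Two further comparisons with the paper's argument. First, the paper never needs a uniform numerical bound on $[G:G^0]$ at all: it takes $\tilde\Lambda$ to be the intersection of all normal subgroups of $\Gamma$ of index at most $J(n)$ and sets $\Lambda=[\tilde\Lambda,\tilde\Lambda]$; since $\tilde\Lambda$ maps into the Jordan abelian normal subgroup of $G/G^0$, its commutator subgroup lands in $G^0$ automatically, and FAb is used only to know $\Lambda$ still has finite index. This commutator trick is what lets one avoid the ``delicate uniformity'' you worry about. Second, your semisimplicity step is more roundabout than necessary: once $\overline{\rho(\Lambda)}=G^0$, the image of $\Lambda$ in the torus $T=G^0/[G^0,G^0]$ is both \emph{dense} and a quotient of the finite group $\Lambda^{ab}$, so $T$ is trivial outright; no uniform bound on the order of torus quotients and no further shrinking of $\Lambda$ is needed (and reductivity of $G^0$ is immediate from irreducibility of $\rho$ plus finiteness of the index of $\overline{\rho(\Lambda)}$ in $G$).
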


\begin{proof} Denote $H=\overline{\rho (\Gamma)}$, then there exists
a finite subgroup $F$ of $H$ such that $H=H^0\cdot F$ (\cite{W}
10.10). By Jordan's Theorem (\cite{W} 9.2), $F$ has an abelian
normal subgroup of index at most $J(n)$.  Thus the same applies to
$H/H^0$.  Let now $\tilde\Lambda$ be the intersection of all the
normal subgroups of $\Gamma$ of index at most $J(n)$ and $\Lambda
= [\tilde\Lambda, \tilde\Lambda]$.  It follows that $\Lambda$ has
finite index in $\Gamma$ and for every $n$-dimensional
representation of $\Gamma$, $\rho (\Lambda)
\subseteq\overline{\rho (\Gamma)}^{\,0}$. This also implies that
$\rho (\Lambda)$ is dense in $\overline{\rho(\Gamma)}^{\,0}$ since
$\Lambda$ has finite index in $\Gamma$ and $\overline{\rho
(\Gamma)}^{\,0}$ has no finite index subgroups. Finally, if $\rho$
is irreducible, then $\overline{\rho (\Gamma)}^{\,0}$ is
reductive, but actually semisimple as it has no abelian quotient.
\end{proof}

\begin{prop}
  A finitely generated FAb group $\Gamma$ is not
rigid iff there exists a finite index subgroup $\Lambda$ of
$\Gamma$ and a simple algebraic group $H$ of adjoint type (i.e.,
$Z(H) =1$) such that $\Lambda$ has infinitely many non-conjugate
homomorphisms $\rho: \Lambda\to H$ with $\rho (H)$ Zariski dense
in $H$.
\end{prop}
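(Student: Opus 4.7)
I would handle the two directions of the iff separately.

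For sufficiency, given $\Lambda \le \Gamma$ of finite index, a simple adjoint $H$, and infinitely many non-conjugate $\rho_i: \Lambda \to H$ with Zariski-dense image, I would compose with the adjoint representation $\mathrm{Ad}: H \hookrightarrow GL(\mathfrak{h}) = GL_n(\C)$. This representation is faithful (because $Z(H)=1$) and irreducible (because $H$ is simple), so each $\mathrm{Ad}\circ\rho_i$ is an irreducible $n$-dimensional representation of $\Lambda$. Two equivalent compositions $\mathrm{Ad}\circ\rho_i$ and $\mathrm{Ad}\circ\rho_j$ come from a $g \in GL_n(\C)$ normalizing $\mathrm{Ad}(H)$; Schur's lemma together with the finiteness of $\mathrm{Out}(H)$ show that each equivalence class absorbs at most $|\mathrm{Out}(H)|$ of the $H$-conjugacy classes of the $\rho_i$. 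Hence $\Lambda$ is not rigid, and by Proposition~1.2 neither is $\Gamma$.

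For necessity, assume $\Gamma$ admits infinitely many non-equivalent irreducible representations $\rho_i: \Gamma \to GL_n(\C)$. I would apply Proposition~1.3 to obtain a (normal) finite-index $\Lambda = \Lambda(n)$, so each $G_i := \overline{\rho_i(\Lambda)}$ is connected semisimple. A Clifford-theory bound---at most $[\Gamma:\Lambda]$ equivalence classes of irreducible $\rho$ can share a given restriction to $\Lambda$---gives infinitely many pairwise non-$GL_n$-conjugate restrictions $\rho_i|_\Lambda$. Because connected semisimple subgroups of $GL_n(\C)$ fall into only finitely many $GL_n$-conjugacy classes (bounded dimension, finitely many isomorphism types, finitely many $n$-dimensional representations per type), I can pass to a subsequence and conjugate inside $GL_n(\C)$ so that $G_i = G$ is a single fixed subgroup and each $\rho_i : \Lambda \to G$ has Zariski-dense image.

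Write $G^{\mathrm{ad}} = G/Z(G) = H_1 \times \cdots \times H_k$ with $H_j$ simple adjoint, and let $\tau_i^{(j)}: \Lambda \to H_j$ be the projections, each with Zariski-dense image. The central claim is that for some $j$, infinitely many $\tau_i^{(j)}$ are pairwise non-$H_j$-conjugate. If not, I thin the subsequence so that for each $j$ there is $h_{i,j} \in H_j$ with $h_{i,j}\tau_1^{(j)}h_{i,j}^{-1} = \tau_i^{(j)}$, lift $(h_{i,j})_j$ to $g_i \in G$, and observe that $\phi_i(\lambda) := g_i\rho_1(\lambda)g_i^{-1}\rho_i(\lambda)^{-1}$ lies in $Z(G)$ and defines a homomorphism $\Lambda \to Z(G)$. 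Since $\Lambda$ is FAb and $Z(G)$ is finite abelian, $\mathrm{Hom}(\Lambda, Z(G))$ is finite; after one more subsequence $\phi_i \equiv \phi$ is constant, making all $\rho_i$ mutually $G$-conjugate---contradicting the non-equivalence of the restrictions.

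The main obstacle I anticipate is the bookkeeping in the necessity direction: several sources of finite ambiguity ($\mathrm{Out}(H)$, $GL_n$-conjugacy classes of connected semisimple subgroups, Clifford multiplicities, and $\mathrm{Hom}(\Lambda, Z(G))$) must each be absorbed by a subsequence, and it is precisely the FAb hypothesis---inherited by finite-index subgroups via Proposition~1.2---that keeps each of these counting sets finite.
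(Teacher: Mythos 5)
Your proposal is correct and follows essentially the same route as the paper: Proposition~1.3 plus the finiteness of conjugacy classes of connected semisimple subgroups of $GL_n(\C)$, a Clifford-theory bound to pass to $\Lambda$, reduction to the adjoint quotient and then to a single simple factor, with the converse handled by the finiteness of $\mathrm{Out}(H)$. Your explicit use of the finiteness of $\mathrm{Hom}(\Lambda, Z(G))$ (from FAb) and the factor-wise pigeonhole merely fills in the details behind the paper's brief assertions that infinitely many representations stay non-equivalent after dividing by the center and after projecting to a simple factor.
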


\begin{proof}  Assume there are infinitely many non-equivalent
irreducible\break $n$-dimensional representations $\rho$ of
$\Gamma$ and let $\Lambda =\Lambda (n)$ as in Proposition 1.3.
Then $\overline{\rho (\Lambda)}$ is a connected semisimple group
for every such $\rho$.  There are only finitely many conjugacy
classes of semisimple connected subgroups of $GL_n(\C)$, so we may
assume that $H=\overline{\rho (\Lambda)}$ is fixed. From Clifford
theorem [W, Theorem 1.7], it follows that there are infinitely
many non-equivalent $\Lambda$-representations with Zariski-dense
image in $H$. Infinitely many of them are still non-equivalent
when $H$ is divided by its finite center.  We can therefore assume
$H$ is of adjoint type and a direct product of its simple
components. There are still infinitely many non-equivalent
homomorphisms of $\Lambda$ to one of the simple factors of $H$.
Replacing $H$ by this factor gives the result.  The other
direction is easy as $(\text{Aut}(H): \text{Inn} (H))$ is finite.
\end{proof}

We now show how new rigid (resp. super-rigid) groups can be
obtained as a fibre product of rigid (resp. super-rigid) groups.

For $i=1,2$, let $L_i$ be a finitely generated group with
epimorphisms $\rho_i$  from $L_i$ onto the same finitely presented
group $D$, and $R_i = \Ker ~ \rho_i$.  Let
$$
\Gamma = L_1 \times_{D} L_2 = \{(x,y)\in L_1\times L_2\mid
\rho_1(x) = \rho_2(y)\}
$$
be the {\bf fibre product} of $L_1$ and $L_2$ over $D$.

The projections $\pi_i$ of $\Gamma$ to $L_i$ are onto with kernels
$(1,R_2)$ and $(R_1,1)$.  Also, one can easily see that $\Gamma$
is finitely generalized  since $D$ is finitely presented.

\begin{prop}
  If $L_1$ and $L_2$ are rigid and $\Gamma$
has FAb, then $\Gamma$ is also rigid.
\end{prop}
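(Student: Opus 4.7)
The plan is to argue by contradiction, using Proposition 1.4 as the engine in both directions. Assume $\Gamma$ is not rigid. Since $\Gamma$ is FAb by hypothesis, Proposition 1.4 furnishes a finite index subgroup $\Lambda\leq\Gamma$, a simple algebraic group $H$ of adjoint type, and an infinite family $\{\rho_\alpha:\Lambda\to H\}$ of pairwise non-conjugate homomorphisms with Zariski dense image. The aim is to push infinitely many of the $\rho_\alpha$ down to a finite index subgroup of $L_1$ (or $L_2$), and then invoke Proposition 1.4 applied to $L_1$ to contradict its rigidity.

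The geometric input is that the kernels of the two projections, $N_1=\{1\}\times R_2$ and $N_2=R_1\times\{1\}$, commute elementwise inside $\Gamma$. Setting $M_i=\Lambda\cap N_i$, each $M_i$ is normal in $\Lambda$, so $\overline{\rho_\alpha(M_i)}$ is a closed normal subgroup of $\overline{\rho_\alpha(\Lambda)}=H$; by simplicity of $H$ it is either trivial or all of $H$. Since $\rho_\alpha(M_1)$ and $\rho_\alpha(M_2)$ commute and $Z(H)=1$, they cannot both be Zariski dense. Hence for every $\alpha$ at least one of $\rho_\alpha(M_1)$, $\rho_\alpha(M_2)$ is trivial, and by the pigeonhole principle I may pass to an infinite subfamily on which, say, $\rho_\alpha(M_1)=1$ for all $\alpha$.

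Each such $\rho_\alpha$ then descends through $\Lambda/M_1$, which is canonically identified with the finite index subgroup $\pi_1(\Lambda)\leq L_1$, giving $\tilde\rho_\alpha:\pi_1(\Lambda)\to H$ with the same Zariski dense image. Non-conjugacy is preserved: if $\tilde\rho_\alpha$ and $\tilde\rho_\beta$ were $H$-conjugate, precomposing with $\pi_1|_\Lambda$ would make $\rho_\alpha$ and $\rho_\beta$ conjugate. Since $L_1$ is rigid, it is FAb by Proposition 1.1, so Proposition 1.4 applies to $L_1$ and declares it not rigid, contradicting the hypothesis.

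The step I expect to be the main obstacle is the middle one: the clean use of simplicity of $H$ together with the commutation $[M_1,M_2]=1$ to force each $\rho_\alpha$ to annihilate one of $M_1,M_2$. This is where the fibre-product hypothesis is really used; the remaining work (pigeonholing to fix a side, transferring non-conjugacy along the quotient $\Lambda\twoheadrightarrow\pi_1(\Lambda)$, and translating FAb/rigidity between $\Gamma$, $\Lambda$, $L_1$, and $\pi_1(\Lambda)$ via Propositions 1.1 and 1.2) is routine bookkeeping.
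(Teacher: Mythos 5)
Your proposal is correct and follows essentially the same route as the paper: invoke Proposition 1.4 to get $\Lambda$, $H$ and infinitely many dense non-conjugate homomorphisms, use that the two projection kernels intersected with $\Lambda$ are commuting normal subgroups whose closures in the simple adjoint group $H$ cannot both be nontrivial, pigeonhole, descend to a finite index subgroup of one factor, and contradict that factor's rigidity via the converse direction of Proposition 1.4 together with Propositions 1.1 and 1.2. The only differences are cosmetic (explicit pigeonholing and the identification $\Lambda/M_1\simeq\pi_1(\Lambda)$, which the paper phrases as commensurability with $L_2$).
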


\begin{proof}  If not, by Proposition 1.4, there exists a finite
index subgroup $\Lambda$ of $\Gamma$ and a simple algebraic group
$H$ that $\Lambda$ has infinitely many  non-equivalent
homomorphisms $\rho:\Lambda\to H$ with dense image.  Let $R'_1 =
(R_1,1)\cap\Lambda$ and $R'_2 = (1,R_2)\cap\Lambda$.  Then
$[R'_1,R'_2] = 1$ and both are normal in $\Lambda$.  Thus
$\overline{\rho (R'_1)}$ and $\overline{\rho (R'_2)}$ are
commuting normal subgroups of $H$, so one of them must be trivial.
Hence for infinitely many $\rho$'s, either $\rho (R'_1)$ is
trivial or $\rho (R'_2)$ is.   In the first case we have that
$\rho$ factors through $\pi_2$, i.e., we have a representation of
$\Lambda /R'_1 = \Lambda /(R_1,1)\cap\Lambda\simeq \Lambda
(R_1,1)/(R_1,1)$ which is commensurable to $L_2$.  Hence, this
group is not rigid and by Proposition 1.2 also $L_2$ is not rigid,
a contradiction.  The case with $\rho (R'_2) = 1$ is treated
similarly and the Proposition is proved.
\end{proof}

It is somewhat more difficult to prove, but under an additional
hypothesis, it is also true that when we start with $L_i$
super-rigid and reductive, then $\Gamma$ is super-rigid.

Recall that a group is called reductive if every representation of
it is reductive.

\begin{prop}
 Assume $L_1$ and $L_2$ are super-rigid and
reductive, $D$ has no finite index subgroup and $\Gamma$ is FAb
then $\Gamma$ is super-rigid and reductive, in fact
\newline $A(\Gamma )^0 = A(L_1)^0 \times A(L_2)^0$
\end{prop}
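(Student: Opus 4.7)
The plan is to prove that every finite-dimensional irreducible representation of $\Gamma$ extends to an algebraic representation of $A(L_1)^0 \times A(L_2)^0$; this will yield super-rigidity of $\Gamma$ with $A(\Gamma)^0 = A(L_1)^0 \times A(L_2)^0$, and reductivity will follow because the target is a reductive algebraic group. Given an irreducible $\rho: \Gamma \to GL_n(\C)$, I would first invoke Proposition 1.3 to pass to $\Lambda = \Lambda(n) \le \Gamma$ of finite index such that $H := \overline{\rho(\Lambda)}$ is connected and semisimple.

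The decomposition step: set $R'_1 := R_1 \times \{1\}$, $R'_2 := \{1\} \times R_2$, and $K_i := \Lambda \cap R'_i$. Because $[K_1, K_2] = 1$ and each $K_i \triangleleft \Lambda$, the Zariski closures $H_i := \overline{\rho(K_i)}$ are commuting normal connected subgroups of $H$; I would write $H = H_1 \cdot H_2 \cdot H_3$ as an almost direct product, with $H_3$ collecting the remaining simple factors. The projection $\Lambda \to H/(H_2 H_3)$ annihilates $K_2 = \ker(\pi_1|_\Lambda)$, so it factors through the finite-index embedding $\Lambda/K_2 \hookrightarrow L_1$; super-rigidity of $L_1$ then produces an algebraic extension $A(L_1)^0 \to H/(H_2 H_3)$. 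The $H_2$-piece is handled symmetrically via $L_2$.

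The crux — and the main obstacle — is showing that $H_3 = 1$. The projection $\rho_3 : \Lambda \to H_3$ kills both $K_1$ and $K_2$, so it factors through each $\pi_i|_\Lambda$, giving maps $\tau_i: \pi_i(\Lambda) \to H_3$ satisfying $\tau_1(\pi_1(\lambda)) = \tau_2(\pi_2(\lambda))$ for every $\lambda \in \Lambda$. Plugging in $\lambda = (r,1) \in K_1$ gives $\tau_1(r) = 1$, so $\tau_1$ kills $\pi_1(K_1)$, which has finite index in $R_1$. Hence $\tau_1(R_1 \cap \pi_1(\Lambda))$ is finite, and — being normal in the Zariski-dense subgroup $\tau_1(\pi_1(\Lambda))$ of $H_3$ — it lies in $Z(H_3)$. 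Passing to the adjoint form, $\tau_1$ induces a Zariski-dense homomorphism $\pi_1(\Lambda)/(R_1 \cap \pi_1(\Lambda)) \to H_3^{\mathrm{ad}}$; since $D = L_1/R_1$ has no proper finite-index subgroup and $\pi_1(\Lambda)$ has finite index in $L_1$, the source is all of $D$.

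The final ingredient — which is what really uses the hypothesis on $D$ — is the observation that a finitely generated group with no nontrivial finite quotient admits no nontrivial linear representation: the image would be a finitely generated linear group, hence residually finite by Malcev's theorem, while inheriting from $D$ the absence of finite quotients, so the image is trivial. Applied to the Zariski-dense map $D \to H_3^{\mathrm{ad}}$, this forces $H_3^{\mathrm{ad}} = 1$ and hence $H_3 = 1$ (since $H_3$ is connected semisimple). Consequently $H = H_1 \cdot H_2$ is covered by $H_1 \times H_2$ via a finite central isogeny; combining the algebraic extensions from $A(L_1)^0$ and $A(L_2)^0$, and possibly shrinking $\Lambda$ to lift $\rho$ through the isogeny, yields an algebraic representation of $A(L_1)^0 \times A(L_2)^0$ whose restriction to $\Lambda$ recovers $\rho$. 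Since every irreducible representation of $\Gamma$ so extends, the natural map $A(\Gamma)^0 \to A(L_1)^0 \times A(L_2)^0$ is an isomorphism, completing the proof.
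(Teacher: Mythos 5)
There is a genuine gap, and it sits exactly where the paper does its hardest work. You reduce everything to \emph{irreducible} representations of $\Gamma$, and then assert that super-rigidity, the computation of $A(\Gamma)^0$, and reductivity all follow because "the target is reductive". But super-rigidity and reductivity are statements about \emph{all} finite-dimensional representations, and the identity component $A(\Gamma)^0$ of the pro-algebraic completion is not determined by the irreducible ones: an irreducible $\rho$ always has $\overline{\rho(\Gamma)}$ reductive, so irreducible representations are structurally blind to any pro-unipotent radical of $A(\Gamma)^0$. Thus, even granting your (essentially correct) analysis of the irreducible case, you have not excluded the possibility that some non-semisimple representation of $\Gamma$ has image closure with nontrivial unipotent radical, which would destroy both the equality $A(\Gamma)^0 = A(L_1)^0\times A(L_2)^0$ and the reductivity of $\Gamma$; deducing reductivity "because the target is reductive" is circular, since whether all representations land in reductive groups is precisely what must be proved. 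A telling symptom is that your argument never uses the hypothesis that $L_1$ and $L_2$ are reductive, which the statement includes for exactly this purpose.

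The paper's proof handles this head-on: for an \emph{arbitrary} representation $\rho$, after passing to $\Lambda$ as in Proposition 1.3 one has $H=\overline{\rho(\Lambda)}$ connected and equal to its own commutator subgroup; assuming the unipotent radical $U$ of $H$ is nontrivial, one reduces to the case where $U$ is a simple, nontrivial $H$-module, uses the fact that $D$ has no finite-dimensional representations to get $H=\overline{\rho(R_1')}\cdot\overline{\rho(R_2')}$, and then uses reductivity of $L_1$ and $L_2$ to force $U\subseteq\overline{\rho(R_i')}$ for both $i$; since $[\overline{\rho(R_1')},\overline{\rho(R_2')}]=1$, the group $H$ then centralizes $U$, a contradiction. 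Only after this does the decomposition-and-extension step (which your $H_1H_2H_3$ argument reproduces, in fact with a cleaner treatment of the finite kernel via $\pi_1(\Lambda)/(R_1\cap\pi_1(\Lambda))\simeq D$) yield $A(\Gamma)^0=A(L_1)^0\times A(L_2)^0$. To repair your proposal you would need to add this semisimplicity step for arbitrary representations, invoking the reductivity of the $L_i$; the rest of your outline can then be kept essentially as is.
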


\begin{proof}  Let $\rho :\Gamma\to GL_n(\C)$ be a representation.
Replacing $\Gamma$ by $\Lambda$ as in Proposition 1.3, we can
assume $H = \overline{\rho (\Lambda)}$ is connected and it is
equal to its own commutator subgroup (since $\Gamma$ is FAb).

We claim that $H$ is semi-simple.  If not, it has a non-trivial
unipotent radical $U$.  Dividing by $[U,U]$ and further if needed,
we can assume that $U$ is a simple $H$-module.  So $H=U\cdot S$,
a semi-direct product when $S$ is semisimple and $U$ a simple
$S$-module. Clearly, this is not the trivial module, since $H$ has
no abelian quotient.

As in the proof of (1.5), $\overline{\rho (R'_1)}$ and
$\overline{\rho (R'_2)}$ are commuting normal subgroups.
Moreover, $\Lambda/R'_1\times R'_2\simeq D$ and $D$ has no finite
index subgroups, hence no finite dimensional representations.  It
follows that $\overline{\rho (R'_1\times R'_2)} = \overline{\rho
(R'_1)}\cdot\overline{\rho (R'_2)}=H$.

If $\overline{\rho (R'_1)}$ does not contain $U$, then
$H/\overline{\rho (R'_1)}$ has non-trivial unipotent radical and
so $\Lambda/R'_1$, which is commensurable to $L_2$ is not
reductive, a contradiction.  Hence $\overline{\rho (R'_1)}$, and
similarly $\overline{\rho (R'_2)}$, contains $U$.  But
$[\overline{\rho (R_1)},\overline{\rho (R'_2)}]=1$, since
$[R'_1,R'_2]=1$.   This implies that $[U,\overline{\rho (R'_2)}] =
[\overline{\rho (R'_1)},U]=1$ and hence $H$ acts trivially on $U$,
a contradiction.  Hence $U$ is trivial and $H$ is semisimple.
Again $\overline{\rho (R'_1)}$ and $\overline{\rho (R'_2)}$ are
normal and commuting.  This implies that (after dividing by the
center of $H$) $H=\overline{\rho (R'_1)}\times\overline{\rho
(R'_2)}$ but then $H/\overline{\rho (R'_i)}$ is an homomorphic
image of $A(L_{3-i})^0$.  This proves that $A(\Gamma)^0 =
A(L_1)^0\times A(L_2)^0$.
\end{proof}

\section{When a fibre product is FAb}

In this section we show some sufficient conditions for a fibre
product to be FAb.

Let $L$ be a finitely generated group, $\rho:L\to D$ a homomorphism onto a finitely presented group $D$ with kernel $R$, and $\Gamma = L\times_{D} L$ the fibre product of $L$ over $D$,
$$
\Gamma = \{(x,y)\in L\times L\mid \rho (x) = \rho (y)\}.
$$
Note that $\Gamma = (R,1)\Delta (L) = (1,R)\Delta (L)$ when
$\Delta (L)$ is the diagonal embedding of $L$ into $\Gamma$,
$\Delta (L) = \{(x,x)\mid x\in L\}$.

\begin{lem}
  If $L$ is FAb and if for every finite index normal
subgroup $L_0$ of $L$, and every finite index subgroup $R_0$ of
$R$ which is normal in $L$, $[L_0,R_0]$ is of finite index in
$R_0$, then $\Gamma$ is FAb.
\end{lem}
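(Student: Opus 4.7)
The strategy is to exhibit, inside every finite index subgroup $\Lambda$ of $\Gamma$, a further finite index subgroup $\Lambda_0$ with $\Lambda_0^{ab}$ finite; since $[\Lambda_0, \Lambda_0] \subseteq [\Lambda, \Lambda]$, this immediately forces $\Lambda^{ab}$ to be finite as well. The $\Lambda_0$ I build takes the special shape $\Lambda_0 = (R_0, 1) \cdot \Delta(L_0)$, where $L_0$ is a finite index normal subgroup of $L$ and $R_0$ is a finite index subgroup of $R$ that is normal in $L$. The condition that $R_0$ is normal in $L$ is what makes $\Delta(L_0)$ normalize $(R_0, 1)$; thus $\Lambda_0$ is a genuine subgroup, and it has finite index in $\Gamma = (R,1)\Delta(L)$.

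\textbf{Locating $L_0$ and $R_0$ inside $\Lambda$.} After replacing $\Lambda$ by its normal core in $\Gamma$, I may assume $\Lambda$ is normal in $\Gamma$. Put
\[
L_1 = \{\, l \in L : \Delta(l) \in \Lambda \,\}, \qquad R_1 = \{\, r \in R : (r,1) \in \Lambda \,\};
\]
both are of finite index in $L$ and $R$ respectively. The key observation is that $\Gamma$ acts by conjugation on the normal subgroup $(R,1)$ through the projection $\pi_1 : \Gamma \twoheadrightarrow L$, so normality of $\Lambda$ in $\Gamma$ promotes to normality of $R_1$ in $L$. I then take $R_0 := R_1$ and $L_0$ to be the normal core of $L_1$ in $L$.

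\textbf{Finiteness of $\Lambda_0^{ab}$.} Since $R_0$ is normal in $L$ we obtain the semidirect product $\Lambda_0 = (R_0, 1) \rtimes \Delta(L_0)$, and the projection to the diagonal factor gives a surjection $\Lambda_0^{ab} \twoheadrightarrow L_0^{ab}$ with finite target (because $L$ is FAb). The commutator identity
\[
[\Delta(l), (r,1)] = ([l, r], 1) \qquad (l \in L_0,\ r \in R_0)
\]
shows $[\Lambda_0, \Lambda_0] \supseteq ([L_0, R_0], 1)$, and the hypothesis that $[L_0, R_0]$ has finite index in $R_0$ forces the kernel of this surjection --- namely the image of $(R_0, 1)$ in $\Lambda_0^{ab}$ --- to be finite. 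The main delicate point throughout is arranging $R_0$ to be simultaneously contained in $\Lambda$, of finite index in $R$, and normal in $L$; this is precisely what the normal-core reduction provides, and it is the only step that really requires care.
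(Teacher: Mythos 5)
Your proof is correct and takes essentially the same route as the paper's: you intersect the given finite-index subgroup with $(R,1)$ and $\Delta(L)$ to produce the pair $(L_0,R_0)$ to which the hypothesis applies, and your semidirect-product computation of $\Lambda_0^{ab}$ is just a repackaging of the paper's observation that $[R_0L_0,R_0L_0]$ contains $([L_0,R_0],1)$ and $\Delta([L_0,L_0])$, hence has finite index in $\Gamma$. The extra care you take with normality (passing to normal cores and using surjectivity of $\pi_1$ to get $R_0\trianglelefteq L$) only makes explicit what the paper leaves implicit.
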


\begin{proof}  Let $\Gamma_0$ be a finite index normal subgroup of
$\Gamma$, $L_0 = \Gamma_0\cap\Delta (L)$ and $R_0 =
(R,1)\cap\Gamma_0$.  By abuse of notation we will consider $R_0$
as a subgroup of $L$ and at the same time a subgroup of $(L, 1)$.
>From our assumption, it follows that $[R_0,L_0]$ is of finite
index in $R_0$ and hence in $(R,1)$. As $L$ is FAb, $[L_0,L_0]$ is
of finite index in $\Delta (L)$. This implies that $[R_0L_0,
R_0L_0]$ is of finite index in $\Gamma$. As $\Gamma_0\supseteq
R_0L_0$, we get that $[\Gamma_0,\Gamma_0]$ is of finite index in
$\Gamma$.
\end{proof}

Let now $L$ be a finitely generated group with FAb.  Assume $L$
has an infinite finitely  presented quotient $D$ with kernel $R$,
such that $\hat D = \{ 1\}$ when $\hat D$ is the profinite
completion of $D$, i.e., $D$ has no proper finite index subgroup.
Clearly $D^{ab} = \{ 1\}$, i.e, $D$ is a perfect group.  It has
therefore a universal central extension \begin{equation}\tag{$*$}
1\rightarrow H_2(D)\rightarrow \bar D\rightarrow D\rightarrow 1
\end{equation}
(cf. \cite{M} \S 5).

\begin{lem}
 Let
\begin{equation}\tag{$**$}
1\rightarrow C\rightarrow E\rightarrow D\rightarrow 1
\end{equation}
be a central extension of $D$ such that $E^{ab}$ is finite.  Then
$rk(C)\leq rk(H_2(D))$.  (For an abelian group $A$ we denote
$rk(A) = \dim_{\Q} (A\otimes_{\Z}\Q)$).
\end{lem}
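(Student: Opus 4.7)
The plan is to apply the five-term exact sequence in integral group homology to the central extension $(**)$. This standard sequence reads
$$
H_2(E) \longrightarrow H_2(D) \longrightarrow C/[E,C] \longrightarrow E^{ab} \longrightarrow D^{ab} \longrightarrow 0.
$$
First I would simplify the end terms using the hypotheses. Since $C$ is central in $E$, one has $[E,C]=1$, so the middle term collapses to $C$. Since $D$ has no proper subgroup of finite index, it is perfect (as already noted in the paragraph preceding the lemma), so $D^{ab}=0$.

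After these reductions, exactness produces a short exact sequence
$$
0 \longrightarrow \mathrm{Im}\bigl(H_2(D)\to C\bigr) \longrightarrow C \longrightarrow E^{ab} \longrightarrow 0
$$
of abelian groups. I would then tensor with $\Q$: the hypothesis that $E^{ab}$ is finite makes $E^{ab}\otimes_{\Z}\Q$ vanish, so $C\otimes_{\Z}\Q$ is identified with a quotient of $H_2(D)\otimes_{\Z}\Q$. Comparing $\Q$-dimensions yields $rk(C)\leq rk(H_2(D))$.

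The only (modest) obstacle is invoking the five-term exact sequence, which is completely standard --- either as the low-degree edge of the Lyndon--Hochschild--Serre spectral sequence of the extension, or via Hopf's formula $H_2(D)=(R\cap[F,F])/[F,R]$. An elementary restatement: present $D=F/R$ with $F$ free, lift $F\to D$ to $\phi\colon F\to E$ using freeness, note $\phi(R)\subseteq C$ and $\phi([F,R])\subseteq[E,C]=1$ so that $\phi$ induces a map $R/[F,R]\to C$; since $E/\phi(F)$ is a finite abelian group (being a quotient of $E^{ab}$) this induced map has image of finite index in $C$, and Hopf's formula then supplies the rank bound. Either route is essentially one line once the right machinery is in hand.
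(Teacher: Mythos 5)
Your main argument is correct, but it follows a different route from the paper's. The paper invokes the universal central extension $1\to H_2(D)\to\bar D\to D\to 1$ of the perfect group $D$ and the induced map $\psi\colon\bar D\to E$ over $D$: since $C\psi(\bar D)=E$ with $C$ central, $\psi(\bar D)$ is normal with abelian quotient, hence of finite index because $E^{ab}$ is finite, and $\psi(\bar D)\cap C=\psi(H_2(D))$ therefore has finite index in $C$, giving the rank bound. Your five-term exact sequence does the same bookkeeping homologically, and is if anything a bit more economical: it needs neither the existence nor the universal property of $\bar D$, and in fact not even perfectness of $D$, since exactness at $C$ already forces the quotient of $C$ by the image of $H_2(D)$ to embed in the finite group $E^{ab}$ (you only use $D^{ab}=1$ to make the last map surjective, which is harmless). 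One caution about your ``elementary restatement'': knowing only that the image of $R/[F,R]\to C$ has finite index in $C$ bounds $rk(C)$ by $rk(R/[F,R])$, which (as $D$ is perfect, so $R[F,F]=F$) equals $rk(H_2(D))$ plus the rank of the free group $F$, not by $rk(H_2(D))$ itself; to finish along those lines you should note that $C\phi([F,F])=E$ (again by perfectness of $D$), so that $\phi(F)$ may be replaced by $\phi([F,F])$ and $\phi([F,F])\cap C=\phi(R\cap[F,F])$ --- which Hopf's formula identifies with the image of $H_2(D)$ --- already has finite index in $C$. With that correction, your elementary variant is precisely the paper's proof, with $[F,F]/[F,R]$ playing the role of the universal central extension.
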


\begin{proof}  There is a homomorphism $\psi$ from the universal
central extension $(*)$ to $(**)$.  We claim that $\psi (\bar D)$
is of finite index in $E$.  Indeed, $C\psi (\bar D) =E$ and $C$ is
central. Hence $\psi (\bar D)\vartriangleleft E$ and the quotient
is abelian.  But $E^{ab}$ is finite, hence $(E:\psi (\bar D)) <
\infty$.  This implies that $\psi (H_2(D))$ is of finite index in
$C$ and hence $rk(C)\leq rk(H_2(D))$.
\end{proof}

\begin{lem}
 For a finite index normal subgroup $L_0$ of $L$ denote $R_0 =
R\cap L_0$.  Then $rk(R_0/[L_0,R_0])\leq rk(H_2(D))$ for every
$L_0\vartriangleleft L$ of finite index.
\end{lem}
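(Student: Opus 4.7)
The plan is to promote the quotient $L_0 / [L_0, R_0]$ into a central extension of $D$ whose abelianization is finite, and then feed this central extension into Lemma~2.2.

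First, I would check that restricting the projection $L \to D$ to $L_0$ still yields a surjection onto $D$. Since $L_0$ has finite index in $L$, the image of $L_0$ in $D$ has finite index in $D$. But $D$ has no proper finite index subgroup, so $L_0 \to D$ is onto, and the kernel is by definition $R_0 = R \cap L_0$. This gives a short exact sequence
\begin{equation*}
1 \to R_0 \to L_0 \to D \to 1.
\end{equation*}

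Next, let $E = L_0 / [L_0, R_0]$ and let $C$ be the image of $R_0$ in $E$, so $C = R_0/[L_0, R_0]$. By construction, $C$ is central in $E$, and the sequence above descends to a central extension
\begin{equation*}
1 \to C \to E \to D \to 1.
\end{equation*}
To apply Lemma~2.2, I need $E^{ab}$ to be finite. Since $[L_0, R_0] \subseteq [L_0, L_0]$, the commutator subgroup $[E,E]$ is the image of $[L_0, L_0]$ in $E$, so $E^{ab} \simeq L_0^{ab}$. But $L$ is FAb and $L_0$ has finite index in $L$, so $L_0^{ab}$ is finite, as required.

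Lemma~2.2 then gives $\mathrm{rk}(C) \le \mathrm{rk}(H_2(D))$, which is exactly the conclusion of the lemma. There is no real obstacle — the only subtle point is the initial use of $\widehat{D} = \{1\}$ to ensure $L_0$ surjects onto $D$ and the short exact sequence is in place; everything else is a formal manipulation reducing the claim to a direct application of Lemma~2.2.
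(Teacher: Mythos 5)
Your proposal is correct and follows the paper's own argument exactly: surjectivity of $L_0\to D$ from $\hat D=\{1\}$, the central extension $1\to R_0/[L_0,R_0]\to L_0/[L_0,R_0]\to D\to 1$, finiteness of the abelianization via FAb of $L$, and then Lemma 2.2. Nothing to add.
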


\begin{proof}  The map $L_0\to D$ is onto since $\hat D=1$ and so:
$$
1\to R_0/[L_0,R_0] \rightarrow L_0/[L_0,R_0] \rightarrow D\rightarrow 1
$$
is a central extension of $D$.  As $L$ is FAb, $L^{ab}_0$ is
finite and our Lemma follows from Lemma 2.2.
\end{proof}

Given $L$ and $\rho :L\to D$ with $R = \Ker ~\rho$ as before, choose now $L_0$ of finite index with $R_0 = R\cap L_0$ and $rk(R_0/[L_0,R_0])$ maximal among all possible such $L_0$.  As $\hat D = \{ 1\}$, it follows that $L_0/R_0\simeq D$ and we have a central extension
\begin{equation}\tag{2.1}
1\rightarrow R_0/[L_0,R_0] \rightarrow
D_0:=L_0/[L_0,R_0]\rightarrow D=L_0/R_0\rightarrow 1
\end{equation}
 Now, $R_0/[L_0,R_0]$ is dense in the profinite
topology of $D_0$ since $\hat D = \{ 1\}$, hence $\hat D_0$ is
abelian.  But, $L_0$ is FAb, so $\hat D_0$ is finite.  Replace
$L_0$ by a finite index subgroup $L'_0$ so that
$D_1=L'_0/([L_0,R_0]\cap L'_0)$ satisfies $\hat D_1 = \{ 1\}$ and
in particular $D_1^{ab} = \{ 1\}$.  Let us now rename and call
$L:= L'_0$, $\tilde D:=D_1$ and $\tilde R = \Ker (L\to \tilde D)$.
So $\tilde D$ is a central (possibly infinite!) extension of the
original $D$ and $\Hat{\Tilde D} = \{ 1\}$.  We get the exact
sequence:
\begin{equation}\tag{2.2}
1\rightarrow\tilde R\rightarrow L\rightarrow\tilde D\rightarrow 1.
\end{equation}

The crucial point is:

\medskip

{\bf Claim:} \  For every normal subgroup $L_1$ of finite index in
$L$ and every $R_1$ of finite index in $\tilde R$ which is normal
in $L$, $[L_1,R_1]$ is of finite index in $R_1$.

\begin{proof}  The group $\tilde R/[L_1,R_1]$ is a finitely
generated virtually abelian group.  Hence $L/[L_1,R_1]$ is an
extension of the form $1\to C\to L/[L_1,R_1]\to D\to 1$ where $C$
is virtually abelian and $\hat D = \{ 1\}$.  Moreover, $L$ is FAb.
So, all this implies that $\widehat{(L/[L_1,R_1])}$ is finite.
Hence, whenever $L/[L_1,R_1]$ is mapped into a residually-finite
group, its image is finite.  This applies, in particular, to the
image of $L/[L_1,R_1]$ in its action (by conjugation) on the
finitely generated virtually abelian group $\tilde R/[L_1,R_1]$.
Thus for some finite index subgroup $L_2$ of $L_1$, $[L_2,\tilde
R]\subseteq [L_1,R_1]$. By the maximality choice of $L_0$, it
follows that $[L_2,\tilde R]$ is of finite index in $\tilde R$ and
hence $[L_1,R_1]$ is also of finite index there.
\end{proof}

To summarize, by replacing $D$ by $\tilde D$ (and the original $L$
by a subgroup of finite index), we get an exact sequence:
\begin{equation}\tag{2.3}
1\rightarrow\tilde R\rightarrow L\rightarrow\tilde D\rightarrow 1
\end{equation}
which satisfies the claim.  Hence by Lemma 2.1, we have:

\begin{cor}
  The fibre product $\Gamma = L\times_{\tilde
D} L$ is FAb.
\end{cor}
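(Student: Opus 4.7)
The plan is to recognize that Corollary 2.4 is essentially an immediate consequence of Lemma 2.1 applied to the new setup constructed just before the corollary. Lemma 2.1 has two hypotheses: (a) that the top group is FAb, and (b) the commutator-index condition on $[L_0, R_0]$. I would simply verify both hypotheses hold for the sequence (2.3) and conclude.

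First I would note that hypothesis (a) is inherited from the original $L$. Indeed, the group we now call $L$ is a finite index subgroup of the original FAb group (it was obtained by replacing $L_0$ with a finite index subgroup $L_0'$, which is itself a finite index subgroup of the original $L$). By Proposition 1.2, the FAb property passes to and from finite index subgroups, so the renamed $L$ remains FAb.

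Next, hypothesis (b) is exactly the content of the Claim proved immediately above: for every finite index normal subgroup $L_1$ of $L$ and every $R_1 \leq \tilde R$ of finite index in $\tilde R$ which is normal in $L$, the commutator $[L_1, R_1]$ has finite index in $R_1$. This is precisely the commutator-index condition appearing in the hypothesis of Lemma 2.1, with $L_0$ and $R_0$ in that lemma playing the roles of $L_1$ and $R_1$, and with $\tilde R$ in place of $R$.

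Therefore, applying Lemma 2.1 to the exact sequence (2.3) directly yields that the fibre product $\Gamma = L \times_{\tilde D} L$ is FAb. There is no real obstacle here: all the technical work has already been absorbed into the construction of $\tilde D$ and $\tilde R$ and the proof of the Claim, so the corollary is a one-line deduction.
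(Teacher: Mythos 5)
Your proposal is correct and matches the paper's own (implicit) argument exactly: the paper deduces Corollary 2.4 from Lemma 2.1 precisely because the Claim supplies the commutator-index hypothesis for the sequence (2.3), while the renamed $L$, being of finite index in the original FAb lattice, is FAb by Proposition 1.2. Nothing further is needed.
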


\section{The main result}

In this section, we pick the fruits of the preparations in the
previous two sections and construct non-arithmetic super-rigid
groups.

Propositions 1.5 and 1.6 show how to get new (super) rigid groups
from old ones.  The standard examples of rigid groups are
irreducible lattices in semisimple groups of higher rank, but
these do not have infinite quotients, so fibre products of them
are commensurable to other arithmetic groups.  This leads us to
lattices in $Sp(n,1)$ and $F_4^{(-20)}$.  They are super-rigid
(\cite{C},\cite{GS}) and at the same time have many infinite
quotients.

So, from now on in this section, let $L$ be a torsion-free uniform
(=cocompact) lattice in one of the groups $Sp(n,1)$ or
$F_4^{(-20)}$.  It is a hyperbolic group and hence by a result
proved independently by Ol'shanskii and Rips (see \cite{O}), it
has a finitely presented infinite quotient $\rho :L\to D$ where
$D$ has no proper finite index subgroups.  Replace $D$ now by
$\tilde D$ (and $L$ by a finite index subgroup, also  called $L$)
as in \S2.  Let $\tilde\rho :L\to\tilde D$ be the new map and
$\Gamma = L\times_{\tilde D} L$ the fibre product.  (The reader
may note that at this point our method differs from the one in
\cite{BL}, where $L\times_{D} L$ was used).

\begin{thm}
 $\Gamma$ is a super-rigid group. In fact,
$A(\Gamma)^0 = A(L)^0\times A(L)^0$.
\end{thm}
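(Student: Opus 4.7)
The plan is to apply Proposition 1.6 with $L_1 = L_2 = L$ and with the quotient taken to be $\tilde D$ (in place of the $D$ there). The four hypotheses I must verify are: (i) $L$ is super-rigid, (ii) $L$ is reductive, (iii) $\tilde D$ has no proper finite index subgroups, and (iv) $\Gamma = L \times_{\tilde D} L$ is FAb.

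For (i) I would invoke the super-rigidity of cocompact lattices in $Sp(n,1)$ and $F_4^{(-20)}$ due to Corlette and Gromov-Schoen: every finite-dimensional complex representation of $L$ either has precompact image or, up to restriction to a finite index subgroup, extends to an algebraic representation of the ambient simple Lie group $G$. Translated into the language of \S1, this means that $A(L)^0$ is a finite-dimensional algebraic group, which is precisely the definition of super-rigid. For (ii) the same dichotomy shows that every finite-dimensional representation of $L$ is completely reducible: either it has precompact image and is unitarizable, or it factors through a representation of the semisimple $G$ and is completely reducible by Weyl's theorem. Hence $L$ is reductive in the sense of \S1.

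Condition (iii) is immediate from the construction at the end of \S2: the group $\tilde D$ was chosen precisely so that $\hat{\tilde D} = \{1\}$, which in particular means $\tilde D$ has no proper finite index subgroups. Condition (iv) is exactly the content of Corollary 2.4. With all four hypotheses verified, Proposition 1.6 yields at once that $\Gamma$ is super-rigid (and reductive) with $A(\Gamma)^0 = A(L)^0 \times A(L)^0$, which is the statement of the theorem.

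I expect no real obstacle at this final step: the substantive work has already been done in \S2, where the passage from the Ol'shanskii-Rips quotient $D$ to the central extension $\tilde D$ was engineered precisely to secure the FAb property of the fibre product (Corollary 2.4). The present theorem is then a direct assembly of that FAb property with the Corlette / Gromov-Schoen super-rigidity, mediated by Proposition 1.6. If anything deserves care in the write-up, it is only the brief verification of reductivity in (ii), since the paper's definition of ``reductive group'' requires complete reducibility of \emph{every} representation and not merely of a single faithful one.
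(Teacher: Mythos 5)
Your proposal is correct and follows exactly the paper's own argument: the paper's proof of this theorem is just the observation that Corollary 2.4 gives FAb, so Proposition 1.6 applies with $L_1=L_2=L$ and $\tilde D$ (whose profinite completion is trivial by construction), with super-rigidity and reductivity of $L$ supplied by Corlette and Gromov--Schoen. Your extra care in spelling out the reductivity hypothesis is a reasonable elaboration of what the paper leaves implicit, not a different route.
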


\begin{proof}  By Corollary 2.4, $\Gamma$ is FAb.  Hence Proposition
1.6 can be applied to deduce that $\Gamma$ is super-rigid (and
hence rigid) and $A(\Gamma)^0 = A(L)^0\times A(L)$.
\end{proof}

The proof actually shows that every representation of $\Gamma$ can
be extended, on a finite index subgroup, to a representation of
$L\times L$.  One can now repeat the standard argument given in
(\cite{BL}, pp.~1171--1172) to show that $\Gamma$ is not
(virtually) isomorphic to any lattice in a product of linear
algebraic groups (over archimedean or non-archimedean fields).  So
$\Gamma$ is not ``of arithmetic type'' giving the desired
counter-example to Platonov's conjecture. \vskip .15in

\section{Remarks on the congruence subgroup property}

Let $L$ be a uniform lattice in $Sp(n,1)$ or $F_4^{(-20)}$ as in
section 3.  Such an $L$ is an arithmetic lattice (\cite{GS}).  The
question whether $L$ satisfies the congruence subgroup property
(CSP, for short) is still open.  Serre's conjecture, posed in
\cite{S}, suggests that lattices in rank one groups do not have
CSP, while lattices in higher rank simple groups do.  So by this
conjecture, $L$ is not expected to have the CSP.  But this
conjecture was made in 1970.  Since then, it has been shown that
in spite of $Sp(n,1)$ and $F_4^{(-20)}$ being rank one groups,
lattices in them behave in many ways (property $T$,
super-rigidity, arithmeticity, etc.) like higher rank lattices.
One may, therefore, suggest that they do have the CSP.  As of now,
the answer is not known for any single such $L$.

We make here two short remarks showing that an affirmative answer
for the CSP for $L$ would have two interesting corollaries:

\begin{remark}
 If $L$ has the CSP, then the above construction
(in \S3) of $D$ (and hence of $\Gamma$) can be done without
appealing to the work of Ol'shanskii and Rips.  In fact, if $N$ is
any infinite normal subgroup of $L$ (which is finitely generated
as normal subgroup) of infinite index, then $D=L/N$ has only
finitely many finite index subgroups.  Indeed, if not, it has
infinitely many finite index normal subgroups.  Pulling them back
to $L$, we get infinitely many normal {\bf congruence} subgroups
of $L$ all containing $N$.  But it is not difficult to prove that
for $L$ being an arithmetic subgroup of a simple algebraic group,
the intersection of any infinite collection of normal congruence
subgroups must be finite and central.  This shows that $D=L/N$ has
only finitely many finite index subgroups.  We can replace $D$ by
their intersection (and $L$ by the preimage) to get the desired
finitely presented quotient without any finite index subgroup.
\end{remark}

It is of interest to recall that the CSP implies super-rigidity.
Hence, if $L$ has CSP, we can produce a counter-example without
appealing to the work of \cite{C} and \cite{GS}.  So the proof
would not use any of the four ingredients (1)-(4) listed in the
introduction.

\begin{remark} 
  If there is one such uniform lattice $L$
satisfying CSP, then there exists a hyperbolic group without any
finite index subgroup and in particular, a non-residually-finite
hyperbolic group.
\end{remark}

Indeed, $L$ is hyperbolic (but residually finite).  For ``many"
$g\in L$, the normal closure $N$ of $g$ in $L$ is an infinite
subgroup of infinite index and $D=L/N$ is also hyperbolic (see
\cite{D}). Repeating the argument from remark 4.1, we deduce that
$D$ has a finite index subgroup (hence also hyperbolic) with no
finite index subgroup.  Thus answering the congruence subgroup
problem in the affirmative for one uniform lattice in $Sp(n,1)$ or
$F_4^{(-20)}$, would solve the long standing problem of the
existence of a non-residually finite hyperbolic group.

\bibliographystyle{amsalpha}

\end{document}